\numberwithin{equation}{section}
\newtheorem{theorem}{Theorem}[section]
\newtheorem{proposition}[theorem]{Proposition}
\theoremstyle{definition}
\newtheorem{definition}[theorem]{Definition}
\newtheorem{remark}[theorem]{Remark}
\newtheorem{example}[theorem]{Example}
\newcommand{\Dleft}{[\hspace{-1.5pt}[}
\newcommand{\Dright}{]\hspace{-1.5pt}]}
\newcommand{\SN}[1]{\Dleft #1 \Dright}
\newcommand{\p}{\mbox{\boldmath$\rho$}}
\newcommand{\Bs}{\mbox{\boldmath$s$}}
\newcommand{\rmi}{ \textnormal{i}}
\newcommand{\rmd}{\textnormal{d}}
\newcommand{\rme}{\textnormal{e}}
\newcommand{\rmh}{\textnormal{h}}
\DeclareMathOperator{\Vect}{Vect}
\DeclareMathOperator{\Ber}{Ber}
\DeclareMathOperator{\str}{str}
\DeclareMathOperator{\tr}{tr}
\font\black=cmbx10 \font\sblack=cmbx7 \font\ssblack=cmbx5 \font\blackital=cmmib10  \skewchar\blackital='177
\font\sblackital=cmmib7 \skewchar\sblackital='177 \font\ssblackital=cmmib5 \skewchar\ssblackital='177
\font\sanss=cmss10 \font\ssanss=cmss8 
\font\sssanss=cmss8 scaled 600 \font\blackboard=msbm10 \font\sblackboard=msbm7 \font\ssblackboard=msbm5
\font\caligr=eusm10 \font\scaligr=eusm7 \font\sscaligr=eusm5  \font\fraktur=eufm10
\font\sfraktur=eufm7 \font\ssfraktur=eufm5 
\font\bsymb=cmsy10 scaled\magstep2
\def\all#1{\setbox0=\hbox{\lower1.5pt\hbox{\bsymb
       \char"38}}\setbox1=\hbox{$_{#1}$} \box0\lower2pt\box1\;}
\def\exi#1{\setbox0=\hbox{\lower1.5pt\hbox{\bsymb \char"39}}
       \setbox1=\hbox{$_{#1}$} \box0\lower2pt\box1\;}
\def\tx#1{{\fam0\relax#1}}
\def\sss#1{{\fam\ssfam\relax#1}}
\def\hpb#1{\setbox0=\hbox{${#1}$}
    \copy0 \kern-\wd0 \kern.2pt \box0}
\def\vpb#1{\setbox0=\hbox{${#1}$}
    \copy0 \kern-\wd0 \raise.08pt \box0}
\def\pmb#1{\setbox0\hbox{${#1}$} \copy0 \kern-\wd0 \kern.2pt \box0}
\def\pmbb#1{\setbox0\hbox{${#1}$} \copy0 \kern-\wd0
      \kern.2pt \copy0 \kern-\wd0 \kern.2pt \box0}
\def\pmbbb#1{\setbox0\hbox{${#1}$} \copy0 \kern-\wd0
      \kern.2pt \copy0 \kern-\wd0 \kern.2pt
    \copy0 \kern-\wd0 \kern.2pt \box0}
\def\pmxb#1{\setbox0\hbox{${#1}$} \copy0 \kern-\wd0
      \kern.2pt \copy0 \kern-\wd0 \kern.2pt
      \copy0 \kern-\wd0 \kern.2pt \copy0 \kern-\wd0 \kern.2pt \box0}
\def\pmxbb#1{\setbox0\hbox{${#1}$} \copy0 \kern-\wd0 \kern.2pt
      \copy0 \kern-\wd0 \kern.2pt
      \copy0 \kern-\wd0 \kern.2pt \copy0 \kern-\wd0 \kern.2pt
      \copy0 \kern-\wd0 \kern.2pt \box0}
\mathchardef\za="710B  
\mathchardef\zb="710C  
\mathchardef\zg="710D  
\mathchardef\zd="710E  
\mathchardef\zve="710F 
\mathchardef\zz="7110  
\mathchardef\zh="7111  
\mathchardef\zvy="7112 
\mathchardef\zi="7113  
\mathchardef\zk="7114  
\mathchardef\zl="7115  
\mathchardef\zm="7116  
\mathchardef\zn="7117  
\mathchardef\zx="7118  
\mathchardef\zp="7119  
\mathchardef\zr="711A  
\mathchardef\zs="711B  
\mathchardef\zt="711C  
\mathchardef\zu="711D  
\mathchardef\zvf="711E 
\mathchardef\zq="711F  
\mathchardef\zc="7120  
\mathchardef\zw="7121  
\mathchardef\ze="7122  
\mathchardef\zy="7123  
\mathchardef\zf="7124  
\mathchardef\zvr="7125 
\mathchardef\zvs="7126 
\mathchardef\zf="7127  
\mathchardef\zG="7000  
\mathchardef\zD="7001  
\mathchardef\zY="7002  
\mathchardef\zL="7003  
\mathchardef\zX="7004  
\mathchardef\zP="7005  
\mathchardef\zS="7006  
\mathchardef\zU="7007  
\mathchardef\zF="7008  
\mathchardef\zW="700A  
\mathchardef\zC="7009  
\newcommand{\be}{\begin{equation}}
\newcommand{\ee}{\end{equation}}
\newcommand{\bea}{\begin{eqnarray}}
\newcommand{\eea}{\end{eqnarray}}
\def\*{{\textstyle *}}
\newcommand{\R}{{\mathbb R}}
\newcommand{\Z}{{\mathbb Z}}
\newcommand{\s}{{\textstyle *}}
\def\Vect{\sss{Vect}}
\def\sT{{\sss T}}
\def\xi{\tx{i}}
\def\s*{{\scriptstyle *}}
\def\cO{\mathcal{O}}
\newcommand{\beas}{\begin{eqnarray*}}
\newcommand{\eeas}{\end{eqnarray*}}
\def\half{\frac{1}{2}}
\title{Modular classes of Q-manifolds: a review and some applications}
   \author{Andrew James Bruce} 
   \address{Mathematics Research Unit, University of Luxembourg, Maison du Nombre 6, avenue de la Fonte, 
L-4364 Esch-sur-Alzette}  
   \email{andrewjamesbruce@googlemail.com}
\date{\today}
\begin{document}

\begin{abstract}
A Q-manifold is a supermanifold equipped with an odd vector field that squares to zero. The notion of the modular class of a Q-manifold -- which is viewed as the obstruction to the existence of a Q-invariant Berezin volume --  is not well know. We review the basic ideas and then apply this  technology to various examples, including $L_{\infty}$-algebroids and  higher Poisson manifolds. \par
\smallskip\noindent
{\bf Keywords:} 
Q-manifolds;~ Modular Classes;~ Characteristic Classes;~ Higher Poisson Manifolds;~  $L_{\infty}$-algebroids.\par
\smallskip\noindent
{\bf MSC 2010:} 17B66;~53D17;~57R20;~58A50.
\end{abstract}

 \maketitle

\setcounter{tocdepth}{2}
 \tableofcontents

\section{Introduction} 
The notion of the modular class of a Poisson manifold was first introduced by Koszul \cite{Koszul:1985} without that name, and then reintroduced with that name by  Weinstein  \cite{Weinstein:1997}. The modular class of a Poisson manifold is  understood as the obstruction to the existence of a volume that is invariant under the action of all Hamiltonian vector fields. The closely related notion of the modular class of a Lie algebroid was introduced by  Evens, Lu and Weinstein \cite{Evans:1999}. Recall that the cotangent bundle of a Poisson manifold canonically comes with the structure of a Lie algebroid. It was quickly realised that the modular class of a Poisson manifold and that of its associated cotangent Lie algebroid are the same up to a factor of $2$.  In the other direction,  a Lie algebroid structure on a vector bundle $A$ is equivalent to a linear Poisson structure on the dual $A^{*}$.  The  modular class of a Lie algebroid can then be interpreted as the obstruction to the existence of a  measure on $A^{*}$ that is invariant with respect to the Hamiltonian vector fields on the Poisson manifold $A^{*}$. If the modular class of a Lie algebroid or Poisson manifold is non-zero, then we have a `violation' of the classical Liouville theorem on symplectic manifolds: there is no  volume form that  is constant in the direction of all Hamiltonian vector fields. For a comprehensive review of modular classes of Poisson manifolds and Lie algebroids see Kosmann-Schwarzbach \cite{Kosmann-Schwarzbach:2008}.  \par 
Va\u{\i}ntrob \cite{Vaintrob:1997} provided an elegant description of Lie algebroids in terms of Q-manifolds: that is supermanifolds equipped with an odd vector field that squares to zero. This supermanifold description allows for a very clear definition of the modular class of a Lie algebroid in terms of the divergence of the homological vector field, though naturally the modular class does not depend on the volume form chosen to define the divergence. While this has been known to experts for a while, the only place in print where details can be found is Grabowski \cite{Grabowski:2011,Grabowski:2014}. These notions directly generalise to more general Q-manifolds, this is known to experts such as Kontsevich,\footnote{As described by Grabowski in \cite{Grabowski:2011}. } Lyakhovich \& Sharapov \cite{Lyakhovich:2004}, Roytenberg \cite{Roytenberg:2010}, Voronov \cite{Voronov:2007} etc., but little has actually appeared in the literature. For this reason we think that it will be beneficial to the wider mathematical community to have these notions in print. \par 
In this paper we review the notion of the modular class of a Q-manifold and apply it to several examples. For instance, we present the notion of the modular class of $L_{\infty}$-algebroids (in the $\Z_{2}$-graded setting), particular examples of which include Mehta's Q-algebroids (see \cite{Mehta:2009}). Furthermore, we examine the case of higher Poisson manifolds (cf. \cite{Voronov:2005}).  Associated with any higher Poisson manifold is a suitably superised $L_{\infty}$-algebra on functions on the supermanifold that each bracket satisfies a Leibniz rule. It is not immediately obvious what the notion of a Hamiltonian vector field is in this context. Thus, we cannot directly modify the definition of the modular class of a Poisson manifold to this higher setting: the classical definition is in terms of the divergence of Hamiltonian vector fields. Our solution is to consider a Q-manifold associated to a higher Poisson manifold, in other language the associated $L_{\infty}$-algebroid. We show that the modular class of a higher Poisson manifold, and so be default a Poisson manifold, is tightly related to the BV-Laplacian. As far as we know, this observation has not been made before.\par
Another interesting example is that a double Lie algebroid (cf. Mackenzie \cite{Mackenzie:1992,Mackenzie:2000}). It is known due to  Voronov \cite{Voronov:2012} that a double Lie algebroid is equivalent to a pair of homological vector fields on the total parity reversion of a double vector bundle. We then take the modular class of a double Lie algebroid to be the modular class associated with the sum of the two homological vector fields. To our knowedge the notion of the modular class of a double Lie algebroid has not appeared in the literature. In part this is probabily due to the original definition of Mackenzie being rather complicated.\par 
We must remark that the modular class of a Q-manifold is a characteristic class in the sense that we assign to any Q-manifold a cohomology class, in this case in the standard cohomology. Other examples of characteristic classes of Q-manifolds are discussed in \cite{Kotov:2015,Lyakhovich:2004,Lyakhovich:2010}. The modular class is relatively simple to calculate explcitly for given examples, and so one of the easier to work with characteristic classes.

\medskip
\noindent \textbf{Our use of supermanifolds} We assume that the reader has some familiarity with the basics of the theory of supermanifolds. We will follow the ``Russian school'' and understand a \emph{supermanifold} $M := (|M|, \:  \cO_{M})$ of dimension $n|m$ as a locally superringed space that is locally isomorphic to $\mathbb{R}^{n|m} := \big (\R^{n}, C^{\infty}(\R^{n})\otimes \Lambda(\zx^{1}, \cdots \zx^{m}) \big)$. In particular,  given any point on $|M|$ we can always find a `small enough' open neighbourhood $|U|\subseteq |M|$ such that we can  employ local coordinates $x^{a} := (x^{\mu} , \theta^{i})$ on $M$, where $x^{\mu}$  and $\theta^{i}$ are, respectively, collections of commuting and  anticommuting elements of $\cO_{M}(|U|)$. We will call (global) sections of the structure sheaf \emph{functions}, and often  denote the supercommutative algebra of all functions as $C^{\infty}(M)$. The underlying smooth manifold $|M|$ we refer to as the \emph{reduced manifold}.  We will make heavy use of local coordinates on supermanifolds and employ the standard abuses of notation when it comes to describing morphisms of supermanifolds.\par
The \emph{tangent sheaf} $\mathcal{T}M$ of a supermanifold $M$ is the sheaf of derivations of sections of the structure sheaf -- this is of course a sheaf of locally free $\cO_{M}$-modules. Sections of the tangent sheaf we refer to as \emph{vector fields}, and denote the $\cO_{M}$-module of vector fields as $\Vect(M)$. The total space of the tangent sheaf we will denote by $\sT M$ and refer to this as the \emph{tangent bundle}. By shifting the parity of the fibre coordinates one obtains the \emph{antitangent bundle} $\Pi \sT M$. We will reserve the nomenclature \emph{vector bundle} for the total space of a sheaf of locally free $\cO_{M}$-modules, that is we will be refering to `geometric vector bundles'.\par
There are several good books on the subject of supermanifolds and we recommend  Carmeli, Caston \& Fioresi \cite{Carmeli:2011},  Manin \cite{Manin:1997}  and Varadrajan \cite{Varadrajan:2004}. We will denote the Grassmann parity of an object $A$ by `tilde', i.e., $\widetilde{A} \in \Z_{2}$.  By `even' and `odd' we will be referring the Grassmann parity of  the objects in question. As we will work in the category of smooth supermanifolds, all the algebras, commutators etc. will be $\Z_{2}$-graded.

\section{Divergence operators and Q-manifolds}
\subsection{Berezin forms and volumes}
Let us for simplicity assume that the supermanifolds that we will be dealing with are superoriented (see \cite{Shnader:1988}). That is the underlying reduced manifold will be oriented, and we further require that we have chosen an atlas such that the Jacobian associated to any change of coordinates is strictly positive. The \emph{Berezin bundle} $\Ber(M)$, is understood as the (even) line bundle over $M$ whose sections in a local trivialisation are of the form
$$ \Bs= D[x] s(x),$$ 
where $D[x]$ is the \emph{coordinate volume element}. Under changes of local coordinate  we have
$$D[x'] = D[x] \Ber\left(\frac{\partial x'}{\partial x}  \right).$$
Sections of $\Ber(M)$ are \emph{Berezin forms} on $M$. Note the the Grassmann parity of a Berezin density is determined by $s(x)$. A \emph{Berezin volume} on $M$ is a nowhere vanishing  even Berezin form. We will in the proceeding denote a Berezin volume as $\p = D[x] \rho(x)$. In particular we require that $\rho(x)$ be invertiable. \par 
Any vector field $X \in \Vect(M)$ defines an infinitesimal diffeomorphism, which in local coordinates is 
$$\phi_{\epsilon}^{*}x^{a} = x^{a} + \epsilon X^{a}(x),$$
where $\epsilon$ is an external infinitesimal parameter of parity $\widetilde{\epsilon} = \widetilde{X}$. It is an easy calculation to show that
$$J^{-1} = \Ber\left(\frac{\partial \phi_{\epsilon}^{*}x}{\partial x} \right) = 1 + (-1)^{\widetilde{a}(\widetilde{X}+1)} \epsilon \frac{\partial X^{a}}{\partial x^{a}}.$$
The pullback of a Berezin density is
$$\phi_{\epsilon}^{*}\Bs = D[x] \: J^{-1} \: \phi_{\epsilon}^{*} s(x), $$
and the Lie derivative is thus 
$$L_{X}\Bs = (-1)^{\widetilde{a}(\widetilde{X}+1)}D[x] \left(\frac{\partial X^{a}s }{\partial x^{a}} \right).$$

\subsection{Divergence of a vector field}
In the classical case on a manifold, one needs a volume form (or in the non-oriented case a density) in order to define the divergence of a vector field. The same is true for supermanifolds, and we take the definition  of the divergence of a vector field $X \in \Vect(M)$ with respect to a chosen Berezin volume to be
$$\p \: \textnormal{Div}_{\p}X = L_{X}\p.$$
In local coordinates this definition amounts to
$$\textnormal{Div}_{\p}X = (-1)^{\widetilde{a}(\widetilde{X}+1)} \: \frac{1}{\rho} \frac{\partial}{\partial x^{a}}\left(X^{a} \rho \right).$$
Up to a sign factor, this local expression is exactly the same as the classical case. Moreover, it is not hard to prove the following properties of the divergence
\begin{subequations}
\begin{align}
&\textnormal{Div}_{\p}(f\: X) = f \: \textnormal{Div}_{\p}X + (-1)^{\widetilde{f} \widetilde{X}} X(f);\\
&\textnormal{Div}_{\p'}X = \textnormal{Div}_{\p}X + X(g);\\
&\textnormal{Div}_{\p}[X,Y] = X(\textnormal{Div}_{\p} Y) -(-1)^{\widetilde{X} \widetilde{Y}}Y(\textnormal{Div}_{\p}X);\label{eqn:DivCommutator}
\end{align}
\end{subequations}
where $X$ and $Y \in \Vect(M)$, $f \in C^{\infty}(M)$, and $\p' = \exp(g)\p$ with $g\in C^{\infty}(M)$ is even. These properties, again up to some signs are identical to the properties of the classical divergence operator on a manifold. 
\subsection{Homological vector fields and Q-manifolds}
Q-manifolds  offer  a powerful conceptual formalism to describe many interesting structures in mathematical physics such as Lie algebroids (see \cite{Vaintrob:1997}) and Courant algebroids (see \cite{Roytenberg:2002}). 
\begin{definition}
A \emph{Q-manifold}  is a supermanifold  $M$, equipped with a distinguished  odd vector field $Q\in \Vect(M)$  that `squares to zero', i.e., $Q^{2} = \half [Q,Q] =0$. The vector field $Q$ is referred to as a \emph{homological vector field}, or a \emph{Q-structure}.
\end{definition}
Note that due to extra signs that appear in supergeometry, $[Q,Q] := Q\circ Q + Q \circ Q$, and hence  $Q^2 =0$ is a non-trivial  condition. In local coordinates we have $Q = Q^{a}(x)\frac{\partial}{\partial x^{a}}$, and the condition that $Q$ is homological is 
$$Q^{2} = 0    \Longleftrightarrow     Q^{a}\frac{\partial Q^{b}}{\partial x^{a}} =0.$$
\begin{definition}
Let $(M_{1}, Q_{1})$ and $(M_{2}, Q_{2})$ be Q-manifolds. Then a morphism of supermanifolds $\psi : M_{1} \rightarrow M_{2}$ is a \emph{morphisms of Q-manifolds} if it relates the two  homological vector fields, i.e.,
$$Q_{1} \circ \psi^{*} = \psi^{*} \circ Q_{2}.$$
\end{definition}
To be explicit, let us employ local coordinates $x^{a}$ on $M_{1}$ and $y^{\alpha}$ on $M_{2}$. We will write, using standard abuses of notation $\psi^{*}y^{\alpha} = \psi^{\alpha}(x)$. The statement that $\psi$ be a morphism of Q-manifolds means locally that
$$Q_{1}^{a}(x)\frac{\partial \psi^{\alpha}(x)}{\partial x^{a}} = Q_{2}^{\alpha}(\psi(x)).$$
Evidently, we obtain the category of Q-manifolds via standard compoistion of supermanifold morphisms.  This category also admits products.
\begin{definition}\label{def:productQs}
 Let $(M_{1}, Q_{1})$ and $(M_{2}, Q_{2})$ be Q-manifolds, then their \emph{Q-manifold product} is the  Q-manifold
 $$\big (M_{12} := M_{1}\times M_{2} , \: Q_{12} = Q_{1} + Q_{2} \big ).$$
\end{definition}

\begin{definition}
The \emph{standard cochain complex} associated with an Q-manifold is the  $\Z_{2}$-graded cochain complex $(C^{\infty}(M), Q)$. The resulting cohomology is referred to as the \emph{standard cohomology}  of the Q-manifold.
\end{definition}
We then see that morphisms of Q-manifolds are cochain maps between the respective standard cochain complexes.\par 
\begin{example}
Any supermanifold can be considered as a Q-manifold equipped with the trivial Q-structure $Q=0$. In fact, on manifolds, i.e., pure even supermanifolds, the only possible Q-structure is the trivial one. In this case the resulting standard cohomology is of course also trivial.
\end{example}
\begin{example}
The antitangent bundle of a supermanifold $\Pi \sT M$ comes canonically equipped with a Q-structure, called the de Rham differential. In local coordinates $(x^{a}, \rmd x^{b})$, where $\widetilde{x^{a}} = \widetilde{a}$ and $\widetilde{\rmd x^{b}} = \widetilde{b}+1$, we have $Q := \rmd = \rmd x^{a}\frac{\partial}{\partial x^{a}}$. Differential forms on a supermanifold are understood as functions on $\Pi \sT M$, that is $\Omega^{\bullet}(M) := C^{\infty}(\Pi \sT M)$. The standard cohomology is then just the de Rham cohomology of the supermanifold $M$, which is known to be isomorphic to the de Rham cohomology of the reduced manifold $|M|$.
\end{example}
Associated canonically with any Q-manifold is an `odd anchor' $a_{Q} : M \rightarrow \Pi \sT M$, which is no more than considering the homological vector field as a section of the antitangent bundle. Thus, in local coordinates we have
$$a_{Q}^{*}(x^{a}, \rmd x^{b}) =  (x^{a}, Q^{b}(x)).$$
\begin{proposition}
The `odd anchor' map is a morphism of Q-manifolds between $(M, Q)$ and $(\Pi \sT M, \rmd)$, that is 
$$Q \circ a_{Q}^{*} - a_{Q}^{*} \circ \rmd =0.$$ 
\end{proposition}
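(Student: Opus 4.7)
The plan is to verify the identity $Q \circ a_Q^* - a_Q^* \circ \rmd = 0$ by checking it on a set of generators of the algebra $C^{\infty}(\Pi \sT M)$, and then invoking a derivation argument to extend the result to the whole algebra. Recall that, locally, $C^{\infty}(\Pi \sT M)$ is generated over $\mathbb{R}$ by the even/odd coordinates $x^{a}$ together with the shifted fibre coordinates $\rmd x^{b}$, both as supercommutative algebras. Since $a_Q^{*}$ is a morphism of supercommutative algebras while $Q$ and $\rmd$ are odd derivations, the two compositions $Q \circ a_Q^{*}$ and $a_Q^{*} \circ \rmd$ are odd $a_Q^{*}$-derivations from $C^{\infty}(\Pi \sT M)$ to $C^{\infty}(M)$. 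Two such derivations that coincide on generators coincide everywhere, so this reduces the proposition to a finite local check.

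First I would check the identity on the base coordinates $x^{a}$. Here $a_Q^{*}(x^{a}) = x^{a}$ and $\rmd(x^{a}) = \rmd x^{a}$, so the two sides read
\begin{equation*}
Q \circ a_Q^{*}(x^{a}) = Q(x^{a}) = Q^{a}(x), \qquad a_Q^{*} \circ \rmd(x^{a}) = a_Q^{*}(\rmd x^{a}) = Q^{a}(x),
\end{equation*}
which manifestly agree. Next I would check on the fibre generators $\rmd x^{b}$, where $\rmd(\rmd x^{b}) = 0$, giving
\begin{equation*}
a_Q^{*} \circ \rmd(\rmd x^{b}) = 0, \qquad Q \circ a_Q^{*}(\rmd x^{b}) = Q\bigl(Q^{b}(x)\bigr) = Q^{a}(x)\,\frac{\partial Q^{b}}{\partial x^{a}}.
\end{equation*}
This last expression vanishes precisely because $Q^{2} = 0$, as recorded earlier in the local form of the homological condition. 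Hence the two derivations agree on all generators.

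The main point, and the only piece of genuine content, is the second step: the compatibility on $\rmd x^{b}$ is nothing but a rewriting of the homological condition $Q^{2}=0$. Once this is noted the rest is formal. A minor subtlety to keep track of is the parity bookkeeping, since $a_Q$ is an ``odd'' anchor and $\rmd x^{b}$ has parity $\widetilde{b}+1$, but the definitions of $a_Q^{*}$, $Q$, and $\rmd$ have been set up so that all signs cancel cleanly and no additional factors appear. Having checked the statement on generators and argued abstractly that it extends by the derivation property, the proposition follows; equivalently, one concludes that $a_Q$ is a cochain map from the standard complex of $(\Pi \sT M, \rmd)$ to that of $(M, Q)$.
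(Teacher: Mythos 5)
Your proof is correct and takes essentially the same route as the paper: both reduce the identity to the single local fact $Q^{a}\,\partial Q^{b}/\partial x^{a}=0$, i.e.\ to $Q^{2}=0$. The paper packages this as a chain-rule computation of $Q(a_{Q}^{*}\omega)-a_{Q}^{*}(\rmd\omega)$ for an arbitrary $\omega$, while you check the generators $x^{a}$ and $\rmd x^{b}$ and extend by the odd $a_{Q}^{*}$-derivation property of both compositions; these are the same argument in different packaging.
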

\begin{proof}
Via direct computation in local coordinates
\begin{eqnarray*}
Q^{a}\frac{\partial}{\partial x^{a}}(a_{Q}^{*}\omega) - a_{Q}^{*}\left(\rmd x^{a}\frac{\partial \omega}{\partial x^{a}} \right) &= &   Q^{a}a_{Q}^{*}\left( \frac{\partial \omega}{\partial x^{a}}\right) - Q^{a}a_{Q}^{*}\left( \frac{\partial \omega}{\partial x^{a}}\right) + Q^{a}\frac{\partial Q^{b}}{\partial x^{a}} a_{Q}^{*}\left( \frac{\partial \omega}{\partial \rmd x^{b}} \right)\\ &= &  Q^{a}\frac{\partial Q^{b}}{\partial x^{a}} a_{Q}^{*}\left( \frac{\partial \omega}{\partial \rmd x^{b}} \right).
\end{eqnarray*}
As $Q^{2} =0$ we obtain the desired result.
\end{proof}
\begin{definition}
Let $(M,Q)$ be a Q-manifold. A vector field $X \in \Vect(M)$ is said to be an \emph{infinitesimal symmetry} or just a \emph{symmetry} of a Q-manifold if and only if $[X,Q] =0$. A symmetry $X$ is said to be an \emph{inner symmetry} if there exists another vector field $Y \in \Vect(M)$ such that $X =[Q,Y]$.
\end{definition}
\begin{example}
On any Q-manifold $(M,Q)$, the homological vector field $Q$ is rather trivially a symmetry.
\end{example}
\begin{example}
Consider the Q-manifold $(\Pi \sT M, \rmd)$. Any vector field $X \in \Vect(M)$ can be sent to its interior derivative $X \mapsto i_{X}$, which in local coordinates is given by
$$i_{X} =  (-1)^{\widetilde{X}}X^{a}(x)\frac{\partial}{\partial \rmd x^{a}}.$$
The Lie derivative, $L_{X} := [\rmd, i_{X}]$, is an inner symmetry of $(\Pi \sT M, \rmd)$, i.e., $[\rmd, L_{X}] =0$.
\end{example}
\section{Modular classes}
\subsection{Definition and main properties of the modular class}
The modular class of a Q-manifold is defined in terms of the divergence of the homological vector field.  Before we give the definition, we need a couple of observations.
\begin{proposition}
Let $(M,Q)$ be a Q-manifold, then $\textnormal{Div}_{\p}Q$ is Q-closed, i.e., $Q\left(\textnormal{Div}_{\p}Q\right) =0$.
\end{proposition}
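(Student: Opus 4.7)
The plan is to apply the commutator identity for the divergence, namely equation \eqref{eqn:DivCommutator}, with the choice $X = Y = Q$. Since $Q$ is odd, the sign factor $(-1)^{\widetilde{X}\widetilde{Y}}$ equals $(-1)^{1\cdot 1} = -1$, so the right-hand side becomes
$$\textnormal{Div}_{\p}[Q,Q] = Q(\textnormal{Div}_{\p} Q) - (-1) Q(\textnormal{Div}_{\p} Q) = 2\, Q(\textnormal{Div}_{\p} Q).$$

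For the left-hand side I would invoke the defining property of a homological vector field: $Q^{2} = \tfrac{1}{2}[Q,Q] = 0$, hence $[Q,Q] = 0$. The divergence of the zero vector field is manifestly zero (from the local coordinate formula, or from linearity of $L_{X}\p$ in $X$), so $\textnormal{Div}_{\p}[Q,Q] = 0$. Combining the two sides gives $2\, Q(\textnormal{Div}_{\p} Q) = 0$, and dividing by $2$ (we are working over $\R$) yields the conclusion $Q(\textnormal{Div}_{\p} Q) = 0$.

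There is essentially no obstacle here — the whole content is packed into the Lie bracket identity already recorded in the paper, together with the defining equation $[Q,Q]=0$ of a Q-structure. The only subtle point is to be careful about the Koszul sign in \eqref{eqn:DivCommutator} when both vector fields are odd, since it is precisely this sign that turns the otherwise cancelling two terms on the right-hand side into twice the same term; this is what makes the argument work rather than giving a trivial tautology.
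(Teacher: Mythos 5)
Your proof is correct, and it takes a genuinely different (though closely related) route from the paper. You specialise the bracket--divergence identity \eqref{eqn:DivCommutator} to $X=Y=Q$, where the Koszul sign $-(-1)^{\widetilde{Q}\widetilde{Q}}=+1$ turns the right-hand side into $2\,Q(\textnormal{Div}_{\p}Q)$, while the left-hand side vanishes because $[Q,Q]=0$ and $\textnormal{Div}_{\p}0=0$; dividing by $2$ finishes the argument. The paper instead works one level down, directly with the Lie derivative of the Berezin volume: it expands
$$0=\tfrac{1}{2}L_{[Q,Q]}\p=L_{Q}\left(L_{Q}\p\right)=L_{Q}\!\left(\p\,\textnormal{Div}_{\p}Q\right)=\p\left(\textnormal{Div}_{\p}Q\right)^{2}+\p\,Q\!\left(\textnormal{Div}_{\p}Q\right),$$
and then kills the first term by observing that $\textnormal{Div}_{\p}Q$ is an odd function, hence squares to zero. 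The two arguments trade off different auxiliary facts: yours leans on the already-recorded identity \eqref{eqn:DivCommutator} plus the ability to divide by $2$ (harmless over $\R$), whereas the paper's uses the Leibniz rule for $L_{Q}$ on Berezin forms plus the nilpotency of odd functions. Yours is arguably the more economical deduction from the properties the paper has already listed; the paper's is self-contained at the level of the definition $\p\,\textnormal{Div}_{\p}X=L_{X}\p$ and makes visible the identity $\left(\textnormal{Div}_{\p}Q\right)^{2}+Q\left(\textnormal{Div}_{\p}Q\right)=0$, of which the proposition is the surviving half.
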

\begin{proof}As $[L_{Q}, L_{Q}] = L_{[Q,Q]} =0$ we have that
$$ L_{Q}\left(L_{Q} \p \right)   = L_{Q}(\p \textnormal{Div}_{\p}Q) = \p \left(\textnormal{Div}_{\p}Q \right)^{2} + \p \: Q\left(\textnormal{Div}_{\p}Q\right) =0.$$
As $\textnormal{Div}_{\p}Q$ is a Grassmann odd function on $M$, it follows that $\left(\textnormal{Div}_{\p}Q \right)^{2}=0$, and we obtain the desired result.
\end{proof}
\begin{proposition}
Let $(M,Q)$ be a Q-manifold, then the derivative of the divergence of the homological vector field $Q$ in the direction of any symmetry is $Q$-exact.
\end{proposition}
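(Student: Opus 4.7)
The plan is to deduce this directly from the commutator property \eqref{eqn:DivCommutator} of the divergence, applied with $Y = Q$. Concretely, given a symmetry $X \in \Vect(M)$, i.e., $[X,Q] = 0$, I would substitute into
$$\textnormal{Div}_{\p}[X,Q] = X(\textnormal{Div}_{\p} Q) - (-1)^{\widetilde{X}\widetilde{Q}} Q(\textnormal{Div}_{\p}X),$$
and, since $\widetilde{Q} = 1$ and the left-hand side vanishes, read off
$$X(\textnormal{Div}_{\p} Q) = (-1)^{\widetilde{X}}\, Q(\textnormal{Div}_{\p}X).$$
The right-hand side is by definition $Q$-exact, with primitive $(-1)^{\widetilde{X}} \textnormal{Div}_{\p} X \in C^{\infty}(M)$, which proves the claim.

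The only conceptual point worth flagging is that this argument makes no reference to the particular Berezin volume $\p$; the choice of $\p$ changes $\textnormal{Div}_{\p}Q$ only by a $Q$-exact term (by property (b) preceding \eqref{eqn:DivCommutator}), so that the cohomology class of $\textnormal{Div}_{\p}Q$ is intrinsic, and the above identity is an identity at the level of cocycles for any representative.

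There is no real obstacle: the statement is essentially a one-line corollary of the graded-Leibniz-type identity \eqref{eqn:DivCommutator} together with the defining symmetry condition $[X,Q] = 0$. The only thing to be careful about is bookkeeping of the sign $(-1)^{\widetilde{X}\widetilde{Q}} = (-1)^{\widetilde{X}}$ coming from $\widetilde{Q} = 1$, which gives the exact primitive with the correct parity.
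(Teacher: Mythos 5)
Your argument is correct and is precisely the paper's own proof: apply the commutator identity \eqref{eqn:DivCommutator} with $Y=Q$, use $[X,Q]=0$ and $\widetilde{Q}=1$ to obtain $X(\textnormal{Div}_{\p}Q) = (-1)^{\widetilde{X}}\, Q(\textnormal{Div}_{\p}X)$, which exhibits the explicit primitive. The additional remark about independence of the choice of $\p$ is accurate but not needed for the statement as posed.
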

\begin{proof}
It follows from the definition of a symmetry, i.e., $[X,Q]=0$, and  the properties of the divergence (\ref{eqn:DivCommutator}) that
$$X\big(\textnormal{Div}_{\p}Q\big) =  (-1)^{\widetilde{X}} Q\big(\textnormal{Div}_{\p}X  \big).$$
\end{proof}
From the properties of the divergence, it is clear that if we change the Berezin volume the the divergence changes by a Q-exact term, i.e. $\textnormal{Div}_{\p'}Q = \textnormal{Div}_{\p}Q + Q(g)$, where $\p' = \exp(g)\p$. Similarly, any (small) change in the divergance in the direction of a symmetry is $Q$-exact.   We then have the following definition
\begin{definition}
The \emph{modular class} of a Q-manifold  is the standard cohomology class of $\textnormal{Div}_{\p}Q$, i.e., 
$$\textnormal{Mod}(Q) := [\textnormal{Div}_{\p}Q]_{\textnormal{St}}.$$
\end{definition}
Note that the modular class is independent of any chosen Berezin volume as any other choice of volume leads to divergences that differ only by something Q-exact, and so Q-closed. Thus, the modular class is a characteristic class of a Q-manifold.  The vanishing of the modular class is a necessary and sufficient condition for the existence of a Berezin volume that is Q-invariant, that is for some choice of Berezin volume $\p$ we have that $L_{Q}\p =0$. \par 
In some given set of local coordinates one can write out the divergence,
$$\textnormal{Div}_{\p}Q =  \frac{\partial Q^{a}}{\partial x^{a}} + Q(\log(\rho)).$$
The \emph{local (characteristic) representative} of the modular class is understood as just the term
\begin{equation}\label{eqn:localrep}
\phi_{Q}(x) := \frac{\partial Q^{a}}{\partial x^{a}}(x).
\end{equation}
In general this term is \emph{not} invariant under changes of coordinates, only the full expression for the divergence is.  However, as we are always dropping terms that are Q-exact, the local representative is still meaningful, though as written it is only a local function on $M$.
\begin{remark}
The expression (\ref{eqn:localrep}) gives the local representative of the standard (coordinate) density (in some chosen local coordinates). In general we do not have the Poincar\'{e} lemma: meaning that Q-closed functions are \emph{not} necessarily locally Q-exact. Thus, it makes sense to speak of a local  (characteristic) representative of the modular class.
\end{remark}
\begin{definition}
A Q-manifold $(M,Q)$ is said to be a \emph{unimodular Q-manifold} if its modular class vanishes. In other words, if there exists a Q-invariant Berezinian volume.
\end{definition}
\begin{example}
The Q-manifold $(\Pi \sT M, \rmd)$ comes with a canonical Berezin volume, which in local coordinates is just $D[x,dx]$, and clearly this is invariant with respect to the de Rham differential. Thus, we have a unimodular Q-manifold.
\end{example}
\begin{example}
Clearly the modular class is explicitly dependent on the homological vector field under study. In particular, we can equip the supermanifold  $\Pi \sT M$ with homological vector fields other than the canonical de Rham differential. For example, let us take $M$ to be a manifold (this  suppresses some signs) and equip it with the following odd vector field
$$Q = \rmd x^{b}\mathcal{N}_{b}^{a}(x) \frac{\partial}{\partial x^{a}} + \frac{1}{2} \rmd x^{a} \rmd x^{b} \left(\frac{\partial \mathcal{N}_{a}^{c}}{\partial x^{b}}  {-}  \frac{\partial \mathcal{N}_{b}^{c}}{\partial x^{a}}\right)\frac{\partial }{ \partial \rmd x^{c}}.$$ 
If the 1-1--tensor $\mathcal{N}_{a}^{b}$ is a \emph{Nijenhuis tensor}, then $Q$ is a homological vector field. Thus, $(\Pi \sT M, Q )$ is in fact a Lie algebroid, with the local representative of modular class being
$$\phi_{\mathcal{N}}(x, \rmd x) =  \rmd x^{a}\frac{\partial \mathcal{N}_{b}^{b}}{\partial x^{a}} =  \rmd \tr(\mathcal{N} ),$$
which in general is non-vanishing. This example is in agreement with Damianou \&   Fernandes \cite[Proposition 2.4]{Damianou:2008}.
\end{example}
\begin{example}
Let  $(M,Q)$ be a Q-manifold, then clearly $(\Pi \sT M, L_{Q})$ is also a Q-manifold. In natural coordinates $(x^{a}, \rmd x^{b})$ on $\Pi \sT M$ the Lie derivative is given by
$$L_{Q} = [\rmd , i_{Q}] = Q^{a} \frac{\partial}{\partial x^{a}}\:  {-} \:  \rmd x^{b}\frac{\partial Q^{a}}{\partial x^{b}}\frac{\partial }{\partial \rmd x^{a}}.$$
Then via inspect we see that $\phi_{L_{Q}} =0$, and so we have a unimodular Q-manifold.
\end{example}
\begin{example}
Combining the two previous example, as $[\rmd, L_{Q}] =0$ we see that $(\Pi \sT M , L_{Q} + \rmd)$ is also a unimodular Q-manifold.
\end{example}
\begin{remark}
The  Mathai--Quillen--Kalkman isomorphism tells us that 
$$\rmd + L_{Q} = \rme^{- i_{Q}} \rmd \rme^{ i_{Q}},$$
and so it is not suprising that we obtain a unimodular Q-manifold in the previous example.
\end{remark}
\begin{example}
Let  $(M,Q)$ be a Q-manifold, then  $(\sT^{*} M, L_{Q})$ is also a Q-manifold. In natural coordinates $(x^{a}, p_{b})$ the canonical Poisson bracket is given by
$$\{F,G   \} = (-1)^{\widetilde{a}(\widetilde{F} +1)}\frac{\partial F}{\partial p_{a}} \frac{\partial G}{\partial x^{a}} \:{-}\: (-1)^{\widetilde{a} \widetilde{F}}\frac{\partial F}{\partial x^{a}} \frac{\partial G}{\partial p_{a}},$$
for any $F$ and $G \in C^{\infty}(\sT^{*}M)$. The Lie derivative can then be understood as
$$L_{Q} :=  \{S, \bullet  \},$$
where $S = Q^{a}(x)p_{a}$ is the symbol of the homological vector field $Q$. Thus, the Lie derivative is a Hamiltonian vector field.  Explcitly we have
$$L_{Q} = Q^{a}\frac{\partial}{\partial x^{a}} - (-1)^{\widetilde{a}} \frac{\partial Q^{b}}{\partial x^{a}}p_{b} \frac{\partial }{\partial p_{a}}.$$ 
Via inspection we see that $\phi_{L_{Q}} =0$, and so we have a unimodular Q-manifold.  In fact this is not at all unexpected as we have a version of Liouville's theorem on symplectic supermanifolds: there is always a Berezin volume on any \emph{even} symplectic supermanifold that us invariant with respect to all Hamiltonian vector fields.
\end{example}
\begin{example}
Let  $(M,Q)$ be a Q-manifold, then  $(\Pi \sT^{*} M, L_{Q})$ is also a Q-manifold. Similarly to the previous example we have a canonical Schouten (odd Poisson) bracket and we define the Lie derivative as $L_{Q} :=  \SN{\mathcal{P}, \bullet}$, where $\mathcal{P}$ is now the odd symbol. As $\SN{\mathcal{P}, \mathcal{P}} =0$, it is clear that $L_{Q}$ is a homological vector field. We can think a Q-structure as a one-Poisson structure. However, in this case we do not have a generalisation of Liouville's theorem and so in general we do \emph{not} have a unimodular Q-manifold.  In fact, direct calculation yields
$$\phi_{L_{Q}} = 2\: \phi_{Q},$$
and so $(\Pi \sT^{*} M, L_{Q})$ is unimodular when $(M,Q)$ is unimodular. We will return to similar examples in Subsection \ref{subsec:HigherPoisson} where we discuss higher Poisson manifolds.
\end{example}
The modular class behaves additively under the Q-manifold product (see Definition \ref{def:productQs}). A little more carefully, we have the following.
\begin{proposition}
Let $(M_{1}, Q_{1})$ and $(M_{2}, Q_{2})$ be Q-manifolds, then the modular class of their Q-manifold product is addative, in the following sense:
$$ \textnormal{Mod}(Q_{12}) =  \textnormal{Mod}(Q_{1}) + \textnormal{Mod}(Q_{2}).$$
\end{proposition}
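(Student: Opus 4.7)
The plan is to reduce the additivity statement to a local coordinate computation for the divergence on a product, and then to interpret the outcome at the level of standard cohomology using that the factor projections are Q-morphisms. More precisely, the projections $\pi_i : M_{12} \to M_i$ satisfy $Q_{12}\circ \pi_i^{*} = \pi_i^{*}\circ Q_i$ (since $\pi_i^{*}f$ depends only on the coordinates of $M_i$, so the other summand of $Q_{12}$ annihilates it), hence by the remark following the definition of morphism they are morphisms of Q-manifolds and induce maps $\pi_i^{*}:H^{\bullet}_{\textnormal{St}}(M_i,Q_i)\to H^{\bullet}_{\textnormal{St}}(M_{12},Q_{12})$. The additivity statement should therefore be read as
$$\textnormal{Mod}(Q_{12}) \;=\; \pi_1^{*}\textnormal{Mod}(Q_1) \;+\; \pi_2^{*}\textnormal{Mod}(Q_2),$$
and it is this identity I shall verify.

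First I would choose Berezin volumes $\p_1 = D[x]\rho_1(x)$ on $M_1$ and $\p_2 = D[y]\rho_2(y)$ on $M_2$, and take as a Berezin volume on $M_{12}$ the product $\p_{12} = D[x,y]\,\rho_1(x)\rho_2(y)$, which in the local product coordinates is manifestly non-vanishing and even. Using the local formula for the divergence and the fact that in the product chart $Q_{12}$ has components $(Q_1^{a}(x), Q_2^{\alpha}(y))$ independent of the other factor's coordinates, a direct calculation gives
$$\textnormal{Div}_{\p_{12}}Q_{12} \;=\; \pi_1^{*}\!\left(\textnormal{Div}_{\p_1}Q_1\right) \;+\; \pi_2^{*}\!\left(\textnormal{Div}_{\p_2}Q_2\right),$$
since $\frac{\partial}{\partial x^{a}}$ does not hit $\rho_2(y)$ nor $Q_2^{\alpha}(y)$ and vice versa, and the sign factor $(-1)^{\widetilde{a}(\widetilde{Q}+1)}$ in each summand matches exactly the factor used to define the divergence of $Q_i$ on $M_i$.

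Passing to standard cohomology classes on $(M_{12},Q_{12})$ then gives the asserted additivity, because the class on the left is by definition $\textnormal{Mod}(Q_{12})$ and the two summands on the right are by construction $\pi_i^{*}\textnormal{Mod}(Q_i)$ (well-defined as cohomology pull-backs since $\pi_i$ are Q-morphisms). Independence of the chosen Berezin volumes on each factor is automatic: any other choice $\p'_i=\exp(g_i)\p_i$ changes $\textnormal{Div}_{\p_i}Q_i$ by the $Q_i$-exact term $Q_i(g_i)$, whose pull-back is $Q_{12}$-exact on $M_{12}$.

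The only subtle point, and the step I would treat most carefully, is bookkeeping the supersigns in the coordinate formula for the divergence on the product so that the two summands really appear with the same sign as in the definition of $\textnormal{Div}_{\p_i}Q_i$. Once this is written out, the remainder is formal: the pull-back maps on standard cohomology are algebra homomorphisms in particular they are additive, and the identity at the cochain level descends verbatim to cohomology.
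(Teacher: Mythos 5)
Your proof is correct and follows essentially the same route as the paper, which simply invokes the linearity of the divergence together with the fact that a Berezin volume on $M_{12}$ is the product of Berezin volumes on the factors. Your additional care in phrasing the right-hand side as $\pi_1^{*}\textnormal{Mod}(Q_1)+\pi_2^{*}\textnormal{Mod}(Q_2)$ is a worthwhile clarification of what the paper leaves implicit, but it is a more explicit rendering of the same argument rather than a different one.
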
 
\begin{proof}
This follows from the linear properties of the divergence, and the fact that a Berezin volume on $M_{12}$ is given by the product of Berezin volumes on $M_{1}$ and $M_{2}$. 
\end{proof}

\subsection{Poincar\'{e} duality}
Recall that the standard cochain complex of a Q-manifold is $(C^{\infty}(M),Q)$. We define the \emph{standard chain complex} as $ \big ( \textnormal{Vol}(M), L_{Q} \big)$, where $\textnormal{Vol}(M)$  stands for the volume forms on $M$, and $L_{Q}$ is the Lie derivative along $Q$. Assuming that $M$ is superoriented and compact (otherwise one should consider compactly  supported densities), we have a natural pairing of a volume with a function via integration,
$$\langle \p , f  \rangle =  \int_{M} \p \: f. $$
From the basic properties of the Lie derivative and the integral we see that
$$\int_{M} L_{Q}(\p \: f)   = \int_{M} \left(L_{Q}\p \right)\: f  + \int_{M} \p \: Q(f).$$
If $(M, Q)$ is unimodular, then intgration is invariant under the action of $Q$, thus the left hand side of the above vanishes. In this case, integeration gives a natural isomorphism between  the standard cochain complex and the standard chain complex, i.e.,
$$\langle L_{Q}\p, f \rangle = {-} \langle  \p, Q(f) \rangle. $$
This natural isomorphism is the generalisation of classical Poincar\'{e} duality.

\subsection{Relative modular classes}
The relative modular class, or the modular class of a Q-manifold morphism can directly be defined as follows.
\begin{definition}
Given a Q-manifold morphism $\psi : (M_{1}, Q_{1}) \rightarrow (M_{2}, Q_{2})$,  the \emph{relative modular class} of $\psi$ is
$$\textnormal{Mod}(\psi) :=  \textnormal{Mod}(Q_{1}) {-}  \psi^{*}(\textnormal{Mod}(Q_{2})),$$
which is a standard cohomology class of $(M_{1}, Q_{1})$.
\end{definition}
By definition, the relative modular class measures the failure of a morpihsm of Q-manifolds to preserve the modular class. In general, there is no reason to expect the modular class to be preserved under morphisms.
\begin{example}
Let $(M, Q)$  be a Q-manifold and  let $a_{Q} : M \rightarrow \Pi \sT M$ be the associated `odd anchor'. Then it is clear that
$$\textnormal{Mod}(a_{Q}) = \textnormal{Mod}(Q).$$
\end{example}
In light of the above example, we see that the modular class of a Q-manifold is a `universal relative modular class' in the sense that it is a canonical relative modular class associated with any Q-manifold. 
\begin{example}
The relative modular class of a Q-manifold isomorphism vanishes.
\end{example}
\begin{example}  Let $\textnormal{j} : N \rightarrow M$ be a subsupermanifold. Furthermore suppose that both these supermaifolds are Q-manifolds, and that we have a morphism of Q-manifolds given by the inclusion morphism $\textnormal{j}$. That is, the associated restriction map satisfies
$$\textnormal{j}^{*} \circ Q_{M} = Q_{N} \circ \textnormal{j}^{*}.$$
As we have a subsupermanifold, we can always find adapted coordinates $(x^{a}, y^{\alpha})$ on $M$, such that the restriction map is given by $\textnormal{j}^{*}(x^{a}, y^{\alpha}) =  (x^{a}, 0)$. Then in these adapted local coordinates we have
\begin{align*}
& Q_{M} = Q_{M}^{a}(x,y)\frac{\partial}{\partial x^{a}} + Q_{M}^{\alpha}(x,y)\frac{\partial}{\partial y^{\alpha}},\\
& Q_{N} = Q_{N}^{a}(x)\frac{\partial}{\partial x^{a}},
\end{align*}
and the condition that these be $\textnormal{j}$-related means
\begin{align*}
&  Q_{N}^{a}(x) = Q_{M}^{a}(x,0), &\textnormal{and}&&  Q_{M}^{\alpha}(x,0) =0.
\end{align*}
As $\textnormal{j}$ is a morphism of $Q$-manifolds we have an induced map between the modular classes
$$\textnormal{j}^{*} :  \textnormal{Mod}(Q_{M}) \longrightarrow \textnormal{Mod}(Q_{N}).$$
However, in general $\textnormal{Mod}(Q_{N}) \neq \textnormal{j}^{*}\textnormal{Mod}(Q_{M})$. By definition, the difference is the relative modular class of $\textnormal{j}$. Directly,  the local representative is
$$\phi_{\textnormal{j}}(x) = {-} \left.\frac{\partial Q^{\alpha}_{M}}{\partial y^{\alpha}}(x,y)\right |_{y=0}.$$ 
Thus we see that the relative modular class only depends on the \emph{linear behaviour} of $Q_{M}^{\alpha}(x,y)$ near $y=0$. Let us then Talyor expand `near' $N$ (noting that on $N$  we have $Q_{M}^\alpha =0 $ ) 
$$Q_{M}^{\alpha}(x,y) =    y^{\beta}\mathbb{A}_{\beta}^{\:\: \alpha}(x) + \cO(y^2),$$
where we have defined 
$$\mathbb{A}_{\beta}^{\:\: \alpha}(x) :=  \left .\frac{\partial Q_{M}^{\alpha}}{\partial y^{\beta}}(x,y) \right|_{y=0}.$$
Note  $\widetilde{\mathbb{A}_{\beta}^{\:\: \alpha}} = \widetilde{\alpha} +\widetilde{\beta} +1$, and so is an \emph{odd matrix} -- this effects the definition of the supertrace.  With this notation in place, we can write
$$\phi_{\textnormal{j}}(x) = {-} \str(\mathbb{A}).$$
\end{example}

\section{Applications and examples}
\subsection{$L_{\infty}$-algebroids}
We follow Bruce \cite{Bruce:2011} (also see \cite{Khudaverdian:2008}) in our definition of a $L_{\infty}$-algebroid,  slightly different notions with different gradings appear in the literature (see for example \cite{Bonavolonta:2013,Sheng:2016}).
\begin{definition}
A super vector bundle $A$ is said to be a \emph{$L_{\infty}$-algebroid} if there exists a homological vector field $\rmd_{A} \in \Vect(\Pi A)$. 
\end{definition}
Note that we do not insist that the homological vector field be linear, or in the graded language, be of degree one with respect to the natural $\mathbb{N}$-grading induced by declaring the base coordinates to be of degree zero and the fibre coordinates to be of degree one.  If the homological vector field is of degree one then we recover via Va\u{\i}ntrob \cite{Vaintrob:1997} a (super) Lie algebroid.\par 
In natural local coordinates $(x^{a}, \zx^{\alpha})$ on $\Pi A$, the Q-structure is of the form
$$ \rmd_{A} =  \sum_{n=0}^{\infty}\frac{1}{n!} \zx^{\alpha_{1}} \cdots \zx^{\alpha_{n}}Q^{a}_{\alpha_{n} \cdots \alpha_{1}}(x) \frac{\partial}{\partial x^{a}}\:  +  \:  \sum_{n=0}^{\infty}\frac{1}{n!} \zx^{\alpha_{1}} \cdots \zx^{\alpha_{n}}Q^{\beta}_{\alpha_{n} \cdots \alpha_{1}}(x) \frac{\partial}{\partial \zx^{\beta}}. $$
The local representative of the modular class is thus
$$\phi_{\rmd_{A}}(x, \zx) =   \sum_{n=0}^{\infty}\frac{(-1)^{\epsilon}}{n!}   \zx^{\alpha_{1}} \cdots \zx^{\alpha_{n}} \frac{\partial Q^{a}_{\alpha_{n} \cdots \alpha_{1}}}{\partial x^{a} \hfill}(x)  \: +  \:   \sum_{n=1}^{\infty}\frac{1}{(n-1)!} \zx^{\alpha_{1}} \cdots \zx^{\alpha_{n}}Q^{\beta}_{\alpha_{n} \cdots \alpha_{1} \beta}(x),$$
where the sign factor is $\epsilon =  \widetilde{a}(\widetilde{\alpha_{1}} + \cdots  + \widetilde{\alpha_{n}} + n)$.\par 
Restricting attention to Lie algebroids we obtain
$$\phi_{\rmd_{A}}(x, \zx) = \zx^{\alpha} \left ( (-1)^{\widetilde{a}(\widetilde{\alpha} +1)} \frac{\partial Q_{\alpha}^{a}}{\partial x^{a}}(x)   \: +\: Q_{\alpha \beta}^{\beta}(x)\right ),$$
which is in agreement with the classical literature, e.g. \cite{Grabowski:2006}. For the case of Lie algebroids, one speaks of a \emph{characteristic local section} of $A^{*}$ as the local representative of the modular class is linear in $\zx$. However, for the case of $L_{\infty}$-algebroids we have an inhomogeneous \emph{characteristic local $A$-form}.\par 
We will say that a $L_{\infty}$-algebroid is a \emph{unimodular $L_{\infty}$-algebroid} if its modular class vanishes. We can think of $L_{\infty}$-algebras (in the $\Z_{2}$-graded conventions) as $L_{\infty}$-algebroids over a point. The definition of  a unimodular $L_{\infty}$-algebra is clear, and  coinsides with the definition given by Gran{\aa}ker \cite{Granaker:2008}, also see Braun \&  Lazarev \cite{Braun:2015}. Further restricting attention to Lie algebras (super or not) reproduces the notion of a unimodular Lie algebra --  the  adjoint map is trace free for all elements in the Lie algebra. Examples of  unimodular Lie algerbas include all Abelian Lie algebras, the Heisenberg Lie algebra and nilpotent Lie algebras.
\begin{remark}
A weighted Lie algebroid is  a Lie algebroid equipped with a homogeneity structure such that $\rmh_{t}: \Pi A \rightarrow \Pi A$ is a morphism of Lie algebroids for all $t \in \R$ (see \cite{Bruce:2016}).  As the homological vector field encoding a weighted Lie algebroid structure is of weight $(0,1)$ (this follows from the definition) the notion of $\rmh$-homogeneous cochains and coboundaries makes sense, thus the standard cohomology inherits a further $\mathbb{N}$-graded structure. From the definition of the modular class we see that $[\textnormal{Div}_{\p}Q]_{\textnormal{St}}$ is homogeneous and of degree zero. This covers the example of $\mathcal{VB}$-algebroids where the homogeneity structure is regular. 
\end{remark}
\begin{example}
Mehta \cite{Mehta:2009} defines a Q-algebroid as a Lie algebroid $(\Pi A, \rmd_{A})$, equipped with a morphic weight zero homological vector field $\Xi \in \Vect(\Pi A)$  (following Mehta's original notation). By morphic, we mean that we have a symmetry, i.e., $[\rmd_{A}, \Xi] =0$. As we have two commuting homological vector fields, we can add them to obtain another homological vector field which is inhomogeneous in weight -- thus we have an $L_{\infty}$-algebroid.  In natural local coordinates we have
$$Q :=  \rmd_{A} + \Xi = \left( Q^{a}(x) + \zx^{\alpha}Q_{\alpha}^{a}(x)  \right)\frac{\partial}{\partial x^{a}}  + \left( \zx^{\alpha}Q_{\alpha}^{\gamma}(x) + \frac{1}{2!} \zx^{\alpha} \zx^{\beta}Q_{\beta \alpha}^{\gamma}(x) \right)\frac{\partial}{\partial \zx^{\gamma}}.$$
The local representative of the modular class is thus
$$\phi_{Q}(x, \zx) = \left( \frac{\partial Q^{a}}{\partial x^{a}}(x)  + Q_{\alpha}^{\alpha}(x) \right)   +  \zx^{\alpha} \left ( (-1)^{\widetilde{a}(\widetilde{\alpha} +1)} \frac{\partial Q_{\alpha}^{a}}{\partial x^{a}} (x)+ Q^{\beta}_{\alpha \beta}(x) \right).$$
\end{example}

\subsection{Higher Poisson manifolds}\label{subsec:HigherPoisson}
It is well known that the modular class of a Poisson manifold is half that of the modular class of the associated cotangent Lie algebroid. We can then use this fact to \emph{define} the modular class of a higher Poisson manifold. First let us recall the definition of a higher Poisson manifold (cf. \cite{Khudaverdian:2008,Voronov:2005})
\begin{definition}
A \emph{higher Poisson manifold} is a pair $(M, \mathcal{P})$, where $\mathcal{P} \in C^{\infty}(\Pi \sT^{*}M)$ is an even (pseudo)multivector field, known as a \emph{homotopy Poisson structure}, that satisfies the Poisson condition $\SN{\mathcal{P}, \mathcal{P}} =0$, where the bracket is the canonical Schouten bracket on $\Pi \sT^{*}M$.
\end{definition}
It is clear, due to the Poisson condition that $(\Pi \sT^{*}M ,\:  Q_{\mathcal{P}} = \SN{\mathcal{P}, \bullet} )$ is a Q-manifold. In fact, we have an $L_{\infty}$-algebroid (see \cite{Bruce:2011}). We then define the modular class of a higher Poisson manifold to be half that of the modular class of the  associated $L_{\infty}$-algebroid -- doing so means that we cover the classical case precisely.\par
Let us examine the local representative of the modular class of $(\Pi \sT^{*}M ,\:  Q_{\mathcal{P}})$ in Darboux coordinates $(x^{a}, x^{*}_{b})$. The canonical Schouten bracket in these coordinates is given by
$$\SN{F,G} = (-1)^{(\widetilde{a}+1)(\widetilde{F}+1)} \frac{\partial F}{\partial x^{*}_{a}} \frac{\partial G}{\partial x^{a}} \;  {-} \;  (-1)^{\widetilde{a}(\widetilde{F}+1)}\frac{\partial F}{\partial x^{a}} \frac{\partial G}{\partial x^{*}_{a}},  $$
 for any $F$ and $G \in C^{\infty}(\Pi \sT^{*}M)$. Direct computations shows that
 $$Q_{\mathcal{P}} = (-1)^{\widetilde{a}+1 } \left(\frac{\partial \mathcal{P}}{\partial x^{*}_{a}} \frac{\partial}{\partial x^{a}} + \frac{\partial \mathcal{P}}{\partial x^{a}}\frac{\partial}{\partial x^{*}_{a}}  \right).$$
 Thus, following the definitions and a simple reordering of the derivatives we obtain
 $$\phi_{Q_\mathcal{P}}(x, x^{*}) = (-1)^{\widetilde{a}+1} \: 2 \left(\frac{\partial^{2}\mathcal{P}}{\partial x^{a} \partial x^{*}_{a}} \right),$$
which up to a factor of $2$ is the (finite dimensional) BV-Laplacian acting on functions (cf. \cite{Batalin:1981,Batalin:1983,Batalin:1984}). In hindsight this is not unexpected as the BV-Laplacian can be naturally identified with the divergance of a multivector field with respect to the coordinate volume (see \cite{Khudaveridan:2002a} for a discussion of this). Via these considerations we are led to the following.
\begin{theorem}
The modular class of a higher Poisson manifold is the standard cohomology class of BV-Laplacian acting on the homotopy Poisson structure, i.e., the cohomology class of
$$\Delta_{\p}\mathcal{P} := \frac{1}{2}\textnormal{Div}_{\p}\: Q_{\mathcal{P}} = (-1)^{\widetilde{a}+1} \: \frac{\partial^{2}\mathcal{P}}{\partial x^{a} \partial x^{*}_{a}}  + \SN{\mathcal{P}, \log(\sqrt{\rho})}.$$ 
\end{theorem}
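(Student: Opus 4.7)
The plan is to assemble the computation carried out in the paragraph immediately preceding the theorem; no genuinely new calculation is required. By definition, the modular class of $(M, \mathcal{P})$ is half the modular class of the associated Q-manifold $(\Pi \sT^{*} M, Q_{\mathcal{P}})$. Since $Q_{\mathcal{P}}$ is odd, the general coordinate formula from Subsection 2.2 collapses (all relevant parity signs $(-1)^{\widetilde{a}(\widetilde{Q}+1)}$ equal $+1$) to
\[
\textnormal{Div}_{\p}Q_{\mathcal{P}} = \phi_{Q_{\mathcal{P}}} + Q_{\mathcal{P}}(\log \rho),
\]
where $\p = D[x, x^{*}]\rho$ in Darboux coordinates on $\Pi \sT^{*} M$.

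Next I would substitute the two ingredients already computed above the theorem. The local representative is $\phi_{Q_{\mathcal{P}}} = (-1)^{\widetilde{a}+1}\, 2\, \partial^{2}\mathcal{P}/\partial x^{a}\partial x^{*}_{a}$; the factor of $2$ appears because the two blocks of the sum over the Darboux coordinates $(x^{a}, x^{*}_{b})$ both contribute the same second partial derivative, the sign for commuting $\partial/\partial x^{a}$ past $\partial/\partial x^{*}_{a}$ being trivial since $\widetilde{a}(\widetilde{a}+1)$ is always even. For the volume-dependent piece, the definition $Q_{\mathcal{P}} = \SN{\mathcal{P}, \bullet}$ gives directly $Q_{\mathcal{P}}(\log \rho) = \SN{\mathcal{P}, \log \rho}$. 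Multiplying by $\tfrac{1}{2}$ and using linearity of the Schouten bracket in the second slot to rewrite $\tfrac{1}{2}\SN{\mathcal{P}, \log \rho}$ as $\SN{\mathcal{P}, \log \sqrt{\rho}}$ produces exactly the displayed formula for $\Delta_{\p}\mathcal{P}$.

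The cohomological content is inherited from the general theory of Subsection 3.1: a change of Berezin volume $\p \mapsto \exp(g)\p$ alters $\textnormal{Div}_{\p}Q_{\mathcal{P}}$, hence $\Delta_{\p}\mathcal{P}$, only by a $Q_{\mathcal{P}}$-exact function, so the standard cohomology class $[\Delta_{\p}\mathcal{P}]_{\textnormal{St}}$ is well-defined and coincides with $\textnormal{Mod}(\mathcal{P})$. The substantive point of the theorem is conceptual rather than technical: one must recognise that $(-1)^{\widetilde{a}+1}\partial^{2}\mathcal{P}/\partial x^{a}\partial x^{*}_{a}$ is the coordinate BV-Laplacian, and that the additive $\SN{\mathcal{P}, \log \sqrt{\rho}}$ term reproduces the well-known dependence of the BV-Laplacian on the choice of reference volume. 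Once this interpretation is in place there is no essential obstacle to overcome.
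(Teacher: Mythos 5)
Your proposal is correct and follows essentially the same route as the paper, which likewise obtains the theorem by combining the Darboux-coordinate expression for $Q_{\mathcal{P}}$, the identification $\phi_{Q_{\mathcal{P}}} = 2\,(-1)^{\widetilde{a}+1}\partial^{2}\mathcal{P}/\partial x^{a}\partial x^{*}_{a}$, the identity $Q_{\mathcal{P}}(\log\rho)=\SN{\mathcal{P},\log\rho}$, and the general fact that a change of Berezin volume shifts the divergence by a $Q_{\mathcal{P}}$-exact term. Your sign bookkeeping (the vanishing of $(-1)^{\widetilde{a}(\widetilde{Q}+1)}$ for odd $Q$ and of $(-1)^{\widetilde{a}(\widetilde{a}+1)}$ when commuting the two partial derivatives) matches the paper's conventions.
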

The vanishing of the modular class implies that there exists a Berezin volume on $\Pi \sT^{*}M$ that is $Q_{\mathcal{P}}$-invariant. This has an infinite dimensional analogue in the BV-formalism. Namely, if we `interpret' a homotopy Poisson structure $\mathcal{P}$ to be be a classical extended action, i.e., $x=$  `fields + ghosts' and $x^{*}=$ `antifields + antighosts', then $Q_{\mathcal{P}}$ is the BRST-operator. We ignore the additional gradings of ghost number etc. The Poisson condition $\SN{\mathcal{P}, \mathcal{P}}=0$ is the analogue of the classical master equation. If the modular class of the extended classical action vanishes then there exists a path integral measure that is BRST-invariant. As the (exponential of the) extended action plus sources is BRST-invariant, the path integral itself is BRST-invariant. One must of course take these statements with a ``grain of salt'' as things are not so well defined in the infinite dimensional setting of quantum field theory. \par 
\begin{remark}
Thinking in terms of quantum field theory, the vanishing of the modular class implies that there is no `one-loop anomaly' (cf. \cite{Roytenberg:2010}). Let us  add `loop corrections', i.e.,  $ \mathcal{P} \rightsquigarrow \mathcal{P}[[\hbar]]  := \mathcal{P} + \hbar \mathcal{P}_{1} + \mathcal{O}(\hbar^{2})$ and insist that the \emph{quantum master equation} is satisfied
$$\Delta \:\rme^{\frac{\rmi}{\hbar} \mathcal{P}[[\hbar]]} =0.$$
The order zero term is just $\SN{\mathcal{P}, \mathcal{P}} =0$,  and the order $\hbar$ term is $\rmi \: \Delta \mathcal{P} = \SN{\mathcal{P}, \mathcal{P}_{1}}$, where we neglect  the choice of density.  Thus, if the modular class vanishes then we can consistantly find up to first order in $\hbar$ a `quantum action', i.e., there is no one-loop BV anomaly.
\end{remark}

\begin{remark}
Similarly, one can consider a homotopy  Schouten structure as an odd analogue of a homotopy Poisson structure, i.e., $S \in C^{\infty}(\sT^{*}M)$, $\widetilde{S} = 1$ and $\{ S,S\}=0$, where the bracket is now the canonical Poisson bracket on the cotangent bundle of a supermanifold $M$. However, as we are dealing with even symplectic geometry we always have the  Liouville volume  which is invariant under the action of Hamiltonian vector fields. Thus, the modular class of the Q-manifold $(\sT^{*}M, \:  Q = \{S,\bullet \})$ is zero.  As we have defined it, the modular class of any higher Schouten manifold  always vanishes. A different notion of the modular class of a Schouten (odd Poisson) manifold can be found in the work of Khudaverdian \& Voronov \cite{Khudaverdian:2002}. Similarly, Courant algebroids understood as even symplectic $NQ$-manifolds of degree $2$ via Roytenberg \cite{Roytenberg:2002}, have vanishing modular class. 
\end{remark}
\begin{example}
If we consider a Poisson supermanifold, i.e., we have a Poisson structure $\mathcal{P} = \frac{1}{2} \mathcal{P}^{ab}(x)x^{*}_{b}x^{*}_{a}$, then the local representative of the modular class is given by
$$\phi_{\mathcal{P}} = \left(\frac{\partial \mathcal{P}^{ab}}{\partial x^{a}}  \right)x^{*}_{b},$$
in agreement with with classical case (see \cite{Weinstein:1997}).
\end{example}
\begin{example}
Consider an order three higher Poisson structure
$$\mathcal{P} = \mathcal{P}(x) + \mathcal{P}^{a}(x)x^{*}_{a} + \frac{1}{2!}\mathcal{P}^{ab}(x)x^{*}_{b} x^{*}_{a} + \frac{1}{2!}\mathcal{P}^{abc}(x)x^{*}_{c} x^{*}_{b} x^{*}_{a}.$$
Then the local representative of the modular class is
$$\phi_{\mathcal{P}} =  \left(\frac{\partial \mathcal{P}^{a}}{\partial x^{a}}\right)   +   \left(\frac{\partial \mathcal{P}^{ab}}{\partial x^{a}}  \right)x^{*}_{b}+  \frac{1}{2!} \left(\frac{\partial \mathcal{P}^{abc}}{\partial x^{a}}  \right)x^{*}_{c} x^{*}_{b}.$$
Note that the order zero piece $\mathcal{P}(x)$ does not contribute to the local representative of the modular class. Thus, if we consiser a zero-Poisson structure (just some chosen even function on $M$), the the modular class always vanishes. Furthermore, note that for the local representative to vanish, we require that each component by order in antimomenta $x^{*}$ to vanish.
\end{example}
\subsection{Double Lie algebroids}
Following Voronov \cite{Voronov:2007,Voronov:2012} we take the following definition.
\begin{definition}
A double vector bundle $D$ is said to be a \emph{double Lie algebroid} if the total parity reversed double vector bundle $\Pi^{2}D$ comes equipped with a pair of commuting homological vector fields $Q_{(0,1)}$ and  $Q_{(1,0)}$ of bi-weight $(0,1)$ and $(1,0)$, respectively.
\end{definition}
Up to a natural isomorphism, it does not matter if we take first shift the parity in  the vertical and then horizontal directions, or vice versa. 
\begin{center}
\leavevmode
\begin{xy}
(0,20)*+{\Pi^{2}D}="a"; (30,20)*+{\Pi B}="b";%
(0,0)*+{\Pi A}="c"; (30,0)*+{ M}="d";%
{\ar "a";"b"}?*!/_3mm/{\pi_{DB}};
{\ar "a";"c"}?*!/^5mm/{\pi_{DA}};
{\ar "b";"d"}?*!/_5mm/{\pi_{B}};
{\ar "c";"d"}?*!/^3mm/{\pi_{A}};
\end{xy}
\end{center}
For concreteness we can make the choice $\Pi^{2} = \Pi_{B} \Pi_{A}$. Let us employ homogeneous local coordinates 
$$(\underbrace{x^{a}}_{(0,0)}, \: \underbrace{\zx^{\alpha}}_{(0,1)} ,\: \underbrace{\theta^{i}}_{(1,0)},\: \underbrace{z^{\mu}}_{(1,1)}),$$
where $(x^{a}, \zx^{\alpha})$ form a coordinate system on $\Pi A$, and $(x^{a}, \theta^{i})$ form a coordinate system on $\Pi B$. In these homogeneous coordinates the pair of homological vector fields are given by
\begin{eqnarray*}
Q_{(0,1)} &=& \zx^{\alpha}Q_{\alpha}^{a}(x)\frac{\partial}{\partial x^{a}} +\frac{1}{2!} \zx^{\alpha}\zx^{\beta}Q_{\beta \alpha}^{\gamma}(x)\frac{\partial}{\partial \zx^{\gamma}}\\
 &+& \left(z^{\mu}Q_{\mu}^{\:\: i}(x) + \theta^{j}\zx^{\alpha}(Q_{\alpha})_{j}^{\:\: i}(x) \right)\frac{\partial}{\partial \theta^{i}} + \left(z^{\nu}\zx^{\alpha}(Q_{\alpha})_{\nu}^{\:\: \mu}(x)   + \frac{1}{2!} \theta^{i} \zx^{\alpha}\zx^{\beta}(Q_{\beta \alpha})_{i}^{\:\:\: \mu}(x) \right)\frac{\partial}{\partial z^{\mu}},\\
 Q_{(1,0)} &=& \theta^{i}Q_{i}^{a}(x)\frac{\partial}{\partial x^{a}} +\frac{1}{2!} \theta^{i}\theta^{j}Q_{j i}^{k}(x)\frac{\partial}{\partial \theta^{k}}\\
 &+& \left(z^{\mu}Q_{\mu}^{\:\: \alpha}(x) + \zx^{\beta} \theta^{i}(Q_{i})_{\beta}^{\:\: \alpha}(x) \right)\frac{\partial}{\partial \zx^{\alpha}} + \left(z^{\nu}\theta^{i}(Q_{i})_{\nu}^{\:\: \mu}(x)   + \frac{1}{2!} \zx^{\alpha} \theta^{i}\theta^{j}(Q_{ji})_{\alpha}^{\:\:\: \mu}(x) \right)\frac{\partial}{\partial z^{\mu}}.
\end{eqnarray*}
Because the two homological vector fields commute, their sum $Q = Q_{(0,1)} + Q_{(1,0)}$ is also a homological vector field. We can then define the modular class of a double Lie algebroid as the modular class of the Q-manifold $(\Pi^{2}D, Q)$. In doing so we see that (up to Grassmann parity) the local representative is a local section of $A^{*} \oplus B^{*}$. Explicitly in local coordinates we have
\begin{eqnarray*}
\phi_{Q}(x, \zx, \theta) &=& \zx^{\alpha} \left((-1)^{\widetilde{a}(\widetilde{\alpha}+1)} \frac{\partial Q_{\alpha}^{a}}{\partial x^{a}} + Q^{\beta}_{\alpha \beta} + (Q_{\alpha})_{i}^{\:\: i} + (Q_{\alpha})_{\mu}^{\:\: \mu} \right)\\
&+& \theta^{i} \left((-1)^{\widetilde{a}(\widetilde{i}+1)} \frac{\partial Q_{i}^{a}}{\partial x^{a}} + Q^{j}_{i j} + (Q_{i})_{\alpha}^{\:\: \alpha} + (Q_{i})_{\mu}^{\:\: \mu} \right).
\end{eqnarray*}
As a specific example consider a Lie algebroid $(\Pi A, \rmd_{A})$ and its antitangent bundle $\Pi \sT \Pi A$ which naturally comes equipped with with two commuting homological vector fields $Q_{(0,1)} :=  L_{\rmd_{A}}=[\rmd, i_{\rmd_{A}}]$ and $Q_{(1,0)} := \rmd$, where $\rmd$ is the canonical de Rham differential on the antitangent bundle.  In homogeneous local coorinates $(x^{a}, \zx^{\alpha}, \rmd x^{b},  \rmd \zx^{\beta})$ we have
\begin{eqnarray*}
L_{\rmd_{A}} &=& \zx^{\alpha}Q_{\alpha}^{a}(x)\frac{\partial}{\partial x^{a}} + \frac{1}{2!} \zx^{\alpha} \zx^{\beta}Q_{\beta \alpha}^{\gamma}\frac{\partial}{\partial \zx^{\gamma}}\\
 &+& \left( (-1)^{\widetilde{\alpha}} \zx^{\alpha} \rmd x^{b}\frac{\partial Q_{\alpha}^{a}}{\partial x^{b}}  - \rmd \zx^{\alpha}Q_{\alpha}^{a}\right)\frac{\partial}{\partial \rmd x^{a}} {-}\left(\rmd \zx^{\alpha}\zx^{\beta}Q_{\beta\alpha}^{\gamma} +  (-1)^{\widetilde{\alpha} + \widetilde{\beta}}\frac{1}{2!}\zx^{\alpha} \zx^{\beta} \rmd x^{b}\frac{\partial Q_{ \beta\alpha}^{\gamma}}{\partial x^{b}}  \right)\frac{\partial}{\partial \rmd \zx^{\gamma}},\\
 \rmd  &=& \rmd x^{a}\frac{\partial}{\partial x^{a}} + \rmd \zx^{\alpha}\frac{\partial}{\partial \zx^{\alpha}}.
\end{eqnarray*}
It is then a matter of direct calculation to see that $\phi_{Q}(x, \zx, \rmd x) =0$. Thus we have  the following.
\begin{theorem}
Any double Lie algebroid of the form $(\Pi \sT \Pi A , L_{\rmd_{A}}, \rmd)$, where $(\Pi A , \rmd_{A})$ is a Lie algebroid, is unimodular.
\end{theorem}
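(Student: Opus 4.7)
The plan is to recognise the statement as an immediate instance of a general fact already recorded earlier in the paper: for any Q-manifold $(M,Q)$, the Q-manifold $(\Pi \sT M, \: L_{Q} + \rmd)$ is unimodular. Taking $M = \Pi A$ and $Q = \rmd_{A}$, the supermanifold $\Pi \sT \Pi A$ carrying the homological vector field $Q_{(0,1)} + Q_{(1,0)} = L_{\rmd_{A}} + \rmd$ fits this template on the nose, and the theorem follows.

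For the direct verification indicated in the text, I would choose the natural coordinate Berezin volume $\p = D[x, \zx, \rmd x, \rmd \zx]$ on $\Pi \sT \Pi A$. By linearity of the divergence, $\textnormal{Div}_{\p}(L_{\rmd_{A}} + \rmd) = \textnormal{Div}_{\p}L_{\rmd_{A}} + \textnormal{Div}_{\p}\rmd$, so it suffices to verify the vanishing of each local representative separately. The piece $\phi_{\rmd}$ vanishes immediately: the components of $\rmd = \rmd x^{a}\partial/\partial x^{a} + \rmd \zx^{\alpha}\partial/\partial \zx^{\alpha}$ are independent of the coordinates in whose directions the corresponding derivations act.

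The substantive part is $\phi_{L_{\rmd_{A}}} = 0$. Reading off the coefficients of the displayed expression for $L_{\rmd_{A}}$, one pairs the $x^{a}$-derivative of the $\partial/\partial x^{a}$-coefficient against the $\rmd x^{a}$-derivative of the $\partial/\partial \rmd x^{a}$-coefficient, and checks that they cancel with the correct Koszul signs; the analogous cancellation must then be checked between the $(\partial/\partial \zx^{\gamma},\: \partial/\partial \rmd \zx^{\gamma})$ pair. Structurally this cancellation is forced by $L_{\rmd_{A}} = [\rmd, i_{\rmd_{A}}]$ being a supercommutator involving $\rmd$, which already has zero supertrace; it is precisely the generic statement verified in the earlier example for $(\Pi \sT M, L_{Q})$ with an arbitrary Q-manifold base, instantiated at $M = \Pi A$.

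The only real obstacle I anticipate is the sign bookkeeping: one has to track Grassmann parities across the four coordinate blocks of weights $(0,0), (0,1), (1,0), (1,1)$ and apply the supertrace formula for the divergence uniformly. Once the signs are under control, the four contributions organise into the two matched pairs above and cancel, giving $\phi_{Q} = 0$ and hence $\textnormal{Mod}(Q) = 0$ as claimed.
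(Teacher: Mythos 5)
Your proposal is correct, and its main line of argument is genuinely different from (and more economical than) the paper's. The paper proves the theorem by writing out $L_{\rmd_{A}}$ and $\rmd$ explicitly in the homogeneous coordinates $(x^{a}, \zx^{\alpha}, \rmd x^{b}, \rmd \zx^{\beta})$ and checking by direct calculation that the local representative $\phi_{Q}$ of the sum vanishes -- essentially to illustrate the general coordinate formula for the modular class of a double Lie algebroid derived just above the theorem. You instead observe that the statement is literally the earlier example ``$(\Pi \sT M, L_{Q} + \rmd)$ is unimodular for any Q-manifold $(M,Q)$'' instantiated at $M = \Pi A$, $Q = \rmd_{A}$; this is a valid reduction, buys brevity, and makes transparent that the result owes nothing to the double-vector-bundle structure per se. Your sketched direct verification (linearity of the divergence, $\phi_{\rmd}=0$ trivially, and the pairwise cancellation in $\phi_{L_{\rmd_{A}}}$) coincides with what the paper actually computes. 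One small caveat: your structural justification via $L_{\rmd_{A}} = [\rmd, i_{\rmd_{A}}]$ is slightly underspecified as stated -- by the commutator property of the divergence you need \emph{both} $\textnormal{Div}_{\p}\rmd = 0$ and $\textnormal{Div}_{\p} i_{\rmd_{A}} = 0$ with respect to the coordinate Berezin volume (the latter holds because the components of $i_{\rmd_{A}}$ depend only on $(x,\zx)$ while the derivations act on $(\rmd x, \rmd\zx)$), not merely the vanishing divergence of $\rmd$; but your explicit cancellation argument covers this, so the gap is cosmetic rather than substantive.
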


\section*{Acknowledgements}
The author cordially thanks Florian Sch\"{a}tz for his comments on an earlier draft of this work. A special thank you goes to Janusz Grabowski for giving the author  the opportunity to present some of the contents of this paper at the Geometric Methods in Physics seminar in Warsaw on April 26\textsuperscript{th}  2017.


\begin{thebibliography}{10}
\begin{small}
\bibitem{Batalin:1981}
Batalin, I.A., Vilkovisky, G.A., Gauge algebra and quantization, \href{http://dx.doi.org/10.1016/0370-2693(81)90205-7}{\emph{Phys. Lett. B}} \textbf{102} (1981), no. 1, 27--31.

\bibitem{Batalin:1983}
 Batalin, I.A., Vilkovisky, G.A., Quantization of gauge theories with linearly dependent generators, \href{https://doi.org/10.1103/PhysRevD.28.2567}{\emph{Phys. Rev. D}} (3) \textbf{28} (1983), no. 10, 2567--2582.

\bibitem{Batalin:1984}
Batalin, I.A., Vilkovisky, G.A., Closure of the gauge algebra, generalized Lie equations and Feynman rules, \href{http://dx.doi.org/10.1016/0550-3213(84)90227-X}{\emph{Nuclear Phys. B}} \textbf{234} (1984), no. 1, 106--124.

\bibitem{Bonavolonta:2013}
Bonavolont\`{a}, G.,  Poncin, N., On the category of Lie n-algebroids,
\href{http://dx.doi.org/10.1016/j.geomphys.2013.05.004}{\emph{J. Geom. Phys.}} \textbf{73} (2013), 70--90,  \href{https://arxiv.org/abs/1207.3590}{arXiv:1207.3590}. 


\bibitem{Braun:2015}
Braun, C., Lazarev, A., Unimodular homotopy algebras and Chern-Simons theory,
\href{http://dx.doi.org/10.1016/j.jpaa.2015.05.017}{\emph{J. Pure Appl. Algebra}} \textbf{219} (2015), no. 11, 5158--5194, \href{https://arxiv.org/abs/1309.3219v3}{arXiv:1309.3219}. 

\bibitem{Bruce:2011}
Bruce, A.J. From $L_{\infty}$-algebroids to higher Schouten/Poisson structures, \href{http://dx.doi.org/10.1016/S0034-4877(10)80030-8}{\emph{Rep. Math. Phys.}} \textbf{67} (2011), no. 2, 157--177, \href{https://arxiv.org/abs/1007.1389}{arXiv:1007.1389}.

\bibitem{Bruce:2016}
Bruce, A.J., Grabowska, K., Grabowski, J., Linear duals of graded bundles and higher analogues of (Lie) algebroids, \href{http://dx.doi.org/10.1016/j.geomphys.2015.12.004}{\emph{J. Geom. Phys.}} \textbf{101} (2016), 71--99, \href{https://arxiv.org/abs/1409.0439}{arXiv:1409.0439}.

\bibitem{Carmeli:2011}
Carmeli, C., Caston L., Fioresi, R., \href{http://www.ems-ph.org/books/book.php?proj_nr=132}{Mathematical foundations of supersymmetry}, EMS Series of Lectures in Mathematics, \emph{European Mathematical Society} (EMS), Z\"{u}rich, 2011. xiv+287 pp. ISBN: 978-3-03719-097-5

\bibitem{Damianou:2008}
Damianou, P.A.,  Fernandes, R.L.,
Integrable hierarchies and the modular class,
\href{http://dx.doi.org/10.5802/aif.2346}{\emph{Ann. Inst. Fourier (Grenoble)}} \textbf{58} (2008), no. 1, 107--137, \href{https://arxiv.org/abs/math/0607784}{arXiv:math/0607784}.

\bibitem{Evans:1999}
Evens, S., Lu, J.H., Weinstein, A., Transverse measures, the modular class and a cohomology pairing for Lie algebroids, \href{https://doi.org/10.1093/qjmath/50.200.417}{\emph{Quart. J. Math. Ser. 2}} \textbf{50} (1999), 417--436, \href{https://arxiv.org/abs/dg-ga/9610008}{arXiv:dg-ga/9610008}.


\bibitem{Grabowski:2011}
Grabowski, J., Modular classes of skew algebroid relations,
\href{https://doi.org/10.1007/S00031-012-9197-2}{\emph{Transform. Groups}} \textbf{17} (2012), no. 4, 989--1010, \href{https://arxiv.org/abs/1108.2366}{arXiv:1108.2366}.

\bibitem{Grabowski:2014}
Grabowski, J., Modular classes revisited, \href{http://dx.doi.org/10.1142/S0219887814600421}{\emph{Int. J. Geom. Methods Mod. Phys}.} \textbf{11} (2014), no. 9, 1460042, 11 pp, \href{https://arxiv.org/abs/1311.3962}{arXiv:1311.3962}.

\bibitem{Grabowski:2006}
Grabowski, J., Marmo, G., Michor, P.W., Homology and modular classes of Lie algebroids, \href{https://doi.org/10.5802/aif.2172}{\emph{Ann. Inst. Fourier (Grenoble)}} \textbf{56} (2006), no. 1, 69--83, \href{https://arxiv.org/abs/math/0310072}{arXiv:math/0310072}. 


\bibitem{Granaker:2008}
Gran{\aa}ker, J.,  Unimodular L-infinity algebras, \emph{preprint} (2008) \href{https://arxiv.org/abs/0803.1763}{arXiv:0803.1763}.

\bibitem{Khudaveridan:2002a}
Khudaverdian, H.M., Laplacians in odd symplectic geometry, \href{http://www.ams.org/books/conm/315/}{\emph{Quantization, Poisson brackets and beyond}} (Manchester, 2001), 199--212, \emph{Contemp. Math.}, \textbf{315}, Amer. Math. Soc., Providence, RI, 2002, \href{https://arxiv.org/abs/math/0212354}{ 	arXiv:math/0212354}. 

\bibitem{Khudaverdian:2002}
Khudaverdian, H.M., Voronov, Th.Th., On odd Laplace operators,
\href{http://dx.doi.org/10.1023/A:1021671812079}{\emph{Lett. Math. Phys.}} \textbf{62} (2002), no. 2, 127--142. \href{https://arxiv.org/abs/math/0205202}{arXiv:math/0205202}.

\bibitem{Khudaverdian:2008}
Khudaverdian, H.M., Voronov, Th.Th., Higher Poisson brackets and differential forms, \href{http://dx.doi.org/10.1063/1.3043861}{\emph{Geometric methods in physics}}, 203--215,
AIP Conf. Proc., 1079, \emph{Amer. Inst. Phys.}, Melville, NY, 2008, \href{https://arxiv.org/abs/0808.3406}{arXiv:0808.3406}. 


\bibitem{Kosmann-Schwarzbach:2008}
Kosmann-Schwarzbach, Y., Poisson manifolds, Lie algebroids, modular classes: a survey, \href{http://dx.doi.org/10.3842/SIGMA.2008.005}{\emph{SIGMA}}  \textbf{4} (2008), Paper 005, 30 pp, \href{https://arxiv.org/abs/0710.3098}{arXiv:0710.3098}.


\bibitem{Koszul:1985}
Koszul, J., Crochet de Schouten-Nijenhuis et cohomologie,
The mathematical heritage of \'{E}lie Cartan (Lyon, 1984),
\emph{Ast\'{e}risque} 1985, Num\'{e}ro Hors S\'{e}rie, 257--271.


\bibitem{Kotov:2015}
Kotov, A., Strobl, T., Characteristic classes associated to Q-bundles,
\href{http://dx.doi.org/10.1142/S0219887815500061}{\emph{Int. J. Geom. Methods Mod. Phys.}} \textbf{12} (2015), no. 1, 1550006, 26 pp, \href{https://arxiv.org/abs/0711.4106v1}{arXiv:0711.4106}. 

\bibitem{Mackenzie:1992}
Mackenzie, K.C.H., Double Lie algebroids and second-order geometry, I., 
 \href{http://dx.doi.org/10.1016/0001-8708(92)90036-K}{\emph{Adv. Math.}} \textbf{94} (1992), no. 2, 180--239. 
 
 \bibitem{Mackenzie:2000}
 Mackenzie, K.C.H., Double Lie algebroids and second-order geometry, II.,
\href{http://dx.doi.org/10.1006/aima.1999.1892}{\emph{Adv. Math.}} \textbf{154} (2000), no. 1, 46--75. 
 

\bibitem{Manin:1997}
Manin, Y.I., \href{http://link.springer.com/book/10.1007\%2F978-3-662-07386-5}{Gauge field theory and complex geometry}, Second edition, Fundamental Principles of Mathematical Sciences, 289. \emph{Springer-Verlag}, Berlin, 1997. xii+346 pp. ISBN: 3-540-61378-1 

\bibitem{Mehta:2009}
Mehta, R. A., Q-algebroids and their cohomology, \href{http://dx.doi.org/10.4310/JSG.2009.v7.n3.a1}{\emph{J. Symplectic Geom.}} \textbf{7} (2009), no. 3, 263--293, \href{https://arxiv.org/abs/math/0703234}{arXiv:math/0703234}.

\bibitem{Lyakhovich:2004}
Lyakhovich, S.L., Sharapov, A.A., Characteristic classes of gauge systems,         \href{http://dx.doi.org/10.1016/j.nuclphysb.2004.10.001}{\emph{Nuclear Phys. B}} \textbf{703} (2004), no. 3, 419--453., \href{https://arxiv.org/abs/hep-th/0407113v2}{arXiv:hep-th/0407113}. 


\bibitem{Lyakhovich:2010}
Lyakhovich, S.L., Mosman, E.A., Sharapov, A.A.,
 Characteristic classes of Q-manifolds: classification and applications, 
 \href{http://dx.doi.org/10.1016/j.geomphys.2010.01.008}{\textit{J. Geom. Phys.}} \textbf{60} (2010), no. 5, 729--759, \href{https://arxiv.org/abs/0906.0466}{arXiv:0906.0466}. 
 
 \bibitem{Roytenberg:2002}
 Roytenberg, D., On the structure of graded symplectic supermanifolds and Courant algebroids, \href{http://www.ams.org/books/conm/315/}{\emph{Quantization, Poisson brackets and beyond}} (Manchester, 2001), 169--185, \emph{Contemp. Math.}, \textbf{315}, Amer. Math. Soc., Providence, RI, 2002, \href{https://arxiv.org/abs/math/0203110}{arXiv:math/0203110}.

\bibitem{Roytenberg:2010}
Roytenberg, D., The modular class of a differential graded manifold, talk presented at the \href{http://math.univ-lyon1.fr/~salnikov/gtsmp/}{International Workshop on Gauge Theories, Supersymmetry and Mathematical Physics}, 6-10 April 2010, Lyon, France. 

\bibitem{Sheng:2016}
Sheng, Y.,  Zhu C.,  Higher extensions of Lie algebroids, \href{http://dx.doi.org/10.1142/S0219199716500346}{\emph{Commun. Contemp. Math.}} \textbf{0}, 1650034 (2016), \href{https://arxiv.org/abs/1103.5920}{arXiv:1103.5920}.


\bibitem{Shnader:1988}
Shander, V.N., Orientations of supermanifolds,
\href{http://dx.doi.org/10.1007/BF01077738}{\emph{Funct. Anal. Appl.}} \textbf{22} (1988), no. 1, 80--82.

\bibitem{Vaintrob:1997}
Va\u{\i}ntrob A.Yu., Lie algebroids and homological vector f\/ields, \href{http://dx.doi.org/10.1070/RM1997v052n02ABEH001802}{\textit{Russ.
  Math. Surv.}} \textbf{52} (1997), 428--429.

\bibitem{Varadrajan:2004}
Varadarajan, V.S., \href{http://bookstore.ams.org/cln-11}{Supersymmetry for mathematicians: an introduction},
Courant Lecture Notes in Mathematics, 11. New York University, Courant Institute of Mathematical Sciences, New York; \emph{American Mathematical Society}, Providence, RI, 2004. viii+300 pp. ISBN: 0-8218-3574-2.

\bibitem{Voronov:2005}
Voronov, Th.,  Higher derived brackets and homotopy algebras, \href{http://dx.doi.org/10.1016/j.jpaa.2005.01.010}{\emph{J. Pure Appl. Algebra}} \textbf{202} (2005), no. 1-3, 133--153, 	\href{https://arxiv.org/abs/math/0304038}{arXiv:math/0304038}.

\bibitem{Voronov:2007}
Voronov, Th.,  Q-manifolds and Mackenzie theory: an overview, \emph{preprint} (2007) \href{https://arxiv.org/abs/0709.4232}{arXiv:0709.4232}.

\bibitem{Voronov:2012}
Voronov, Th., Q-manifolds and Mackenzie theory, \href{http://dx.doi.org/10.1007/s00220-012-1568-y}{\emph{Comm. Math. Phys.}} \textbf{315} (2012), no. 2, 279--310. 

\bibitem{Weinstein:1997}
Weinstein A., The modular automorphism group of a Poisson manifold,
\href{http://dx.doi.org/10.1016/S0393-0440(97)80011-3}{\emph{J. Geom. Phys.}} \textbf{23} (1997), 379--394.

\end{small}
\end{thebibliography}
\end{document}